\newtheorem{definition}{Definition}[section]
\newtheorem{theorem}[definition]{Theorem}
\newtheorem{cor}[definition]{Corollary}
\newtheorem{question}[definition]{Question}
\newtheorem{example}[definition]{Example}
\newtheorem{remark}[definition]{Remark}
\newcommand{\V}{\mathcal{V}}
\begin{document}

\title{A refined nc Oka-Weil theorem}
\author{Kenta Kojin}
\address{
Graduate School of Mathematics, Nagoya University, 
Furocho, Chikusaku, Nagoya, 464-8602, Japan
}
\email{m20016y@math.nagoya-u.ac.jp}
\date{\today}
\keywords{nc Oka-Weil theorem, nc functions, free holomorphic functions}

\maketitle
\begin{abstract}
This short note refines nc Oka-Weil theorem by using a characterization of free compact nc sets based on the notion of dilation hulls. A consequence of it is that any free holomorphic function can be represented as a free polynomial on each free compact nc set.
\end{abstract}


\section{Introduction and Preliminaries}

The celebrated Oka-Weil theorem in the classical complex analysis implies that any holomorphic function can uniformly be approximated by polynomials on each polynomially convex compact set (See e.g., \cite[Chap. III, Theorem 5.1]{Gam}). Thus, the theorem is quite fundamental in applications. Agler and McCarthy  \cite{AMg} proved its nc analog (nc stands for noncommutative), and then Ball, Marx and Vinnikov  \cite{BMV} proved it in more general settings. The goal of this note is to refine it. 

We review some preliminary materials on nc functions. Let $\V$ be a complex vector space. Thus $\V$ can be regarded as a bimodule over $\mathbb{C}$. Denote by $\V^{m\times n}$ all the $m\times n$ matrices over $\V$. We define the associated nc space $\V_{nc}$ as the disjoint union of $\V^{n\times n}$ all over $n\in\mathbb{N}$:

\begin{equation*}
\V_{nc}=\displaystyle\coprod_{n=1}^{\infty}\V^{n\times n}.
\end{equation*}
A subset $\Omega$ of $\V_{nc}$ is said to be an {\bf nc set} if $\Omega$ is closed under direct sums as follows. If $x\in\Omega_n:=\Omega\cap\V^{n\times n}$ and $y\in\Omega_m$, then $x\oplus y=
\begin{bmatrix}
x&0\\
0&y
\end{bmatrix}\in\Omega_{n+m}$.

Assume next that $\Omega$ is an nc set of $\V_{nc}$ and $\V_0$ is another complex vector space. For $\alpha\in\mathbb{C}^{n\times m}$, $x\in\V^{m\times k}$, $\beta\in\mathbb{C}^{k\times l}$, the bimodule structure of $\V$ over $\mathbb{C}$ enables us to define the matrix multiplication $\alpha x \beta\in\V^{n\times l}$ naturally, and similarly $\alpha V\beta$ makes sense as an element of $\V_0^{n\times l}$ for $V\in\V_0^{m\times k}$. A function $f:\Omega\rightarrow\V_{0,nc}$ is an {\bf nc function} if
\begin{enumerate}
\item $f$ is {\bf graded}, i.e., if $x\in\Omega_n$, then $f(x)\in\V_0^{n\times n}$, and
\item $f$ {\bf respects intertwinings}, i.e., whenever $x\in\Omega_n$, $y\in\Omega_m$ and $\alpha\in\mathbb{C}^{m\times n}$ satisfy $\alpha x=y\alpha$, then $\alpha f(x)=f(y)\alpha$.
\end{enumerate}
Note that $f$ is an nc function if and only if it satisfies the following conditions (see \cite[Section I.2.3]{KVV}):
\begin{enumerate}
\item $f$ is graded,
\item $f$ {\bf respects direct sums}, i.e., if $x$, $y$ and $x\oplus y$ are in $\Omega$, then $f(x\oplus y)=f(x)\oplus f(y)$, and
\item $f$ {\bf respects similarities}, i.e., whenever $x, y\in\Omega_n$,
 $\alpha\in\mathbb{C}^{n\times n}$ with $\alpha$ invertible such that
  $y=\alpha x\alpha^{-1}$, then $f(y)=\alpha f(x)\alpha^{-1}$.
\end{enumerate}

Next, we define a free topology. A subset $\Xi$ of $\V_{nc}$ is a {\bf full nc subset }of $\V_{nc}$ if $\Xi$ is an nc set and {\bf invariant under left injective intertwinings}, that is, whenever $x\in\Xi$, $y\in\V^{m\times m}$ such that $Iy=xI$ for some injective $I\in\mathbb{C}^{n\times m}$, 
then $y\in\Xi_m$. We assume that $\mathbf{A}$ is an algebra of nc functions from a full nc set $\Xi$ of $\V_{nc}$ into $\mathbb{C}_{nc}$. If $Q=[q_{ij}]$ is a finite-size matrix with entries 
$q_{ij}$ in $\mathbf{A}$, we define the nc subset $\mathbb{D}_Q\subset\Xi$ by
\begin{equation*}
\mathbb{D}_Q=\{z\in\Xi\;|\;\|Q(z)\|<1\}.
\end{equation*}
Here $\|Q(z)\|$ denotes the operator norm. The nc subset $\mathbb{D}_Q$ is called a {\bf basic A-free open set}. Since the intersection of two basic {\bf A}-free open sets is again a basic {\bf A}-free open set, those sets define a topology on $\Xi$, called the {\bf A-free topology}.

\begin{example}
\upshape Let $\V$ be the vector space $\mathbb{C}^d$ and $\Xi=\V_{nc}:=\coprod_{n=1}^{\infty}(\mathbb{C}^d)^{n\times n}$. We identify $(\mathbb{C}^d)^{n\times n}$ with $\mathbb{M}_n^d:=(\mathbb{C}^{n\times n})^d$ ($d$-tuples of $n\times n$ complex matrices), and hence we may view $\Xi$ as $\mathbb{M}^d:=\coprod_{n=1}^{\infty}\mathbb{M}_n^d$. We denote by $\mathcal{P}_d$ the algebra of all free polynomials in $d$-variables (A free polynomial, also called a noncommutative polynomial, in $d$-variables is a finite linear combination of words), and it induces a topology on $\mathbb{M}^d$ with letting ${\bf A}=\mathcal{P}_d$. This topology is simply called the {\bf free topology}.
\end{example}

So far, we have not yet introduced any kind of assumption for nc functions to be ``holomorphic". Here is such an assumption.

\begin{definition}
Let $\Omega$ be an {\bf A}-free open subset (may not be an nc subset) of a full nc subset $\Xi$ of $\V_{nc}$. A graded function $f:\Omega\rightarrow\mathbb{M}^1$ is {\bf A-free holomorphic} if for any $x\in\Omega$ there exists a basic {\bf A}-free open set $\mathbb{D}_Q$ such that it contains $x$ and $f$ gives a bounded nc function on $\mathbb{D}_Q$. 
\end{definition}

For nc functions, local boundedness and holomorphy are closely related (See e.g., \cite[Theorem 12.17]{AMY}, or \cite[Theorem 7.2]{KVV}). In fact, we remark that an {\bf A}-free holomorphic function is automatically holomorphic in the classical sense. Namely, the above definition is indeed a requirement for nc functions to be ``holomorphic".

The purpose of this note is to refine the following result when {\bf A} is a unital algebra (e.g., ${\bf A}=\mathcal{P}_d$). 

\begin{theorem}$($nc Oka-Weil Theorem, \cite[Theorem 3.3]{BMV}$)$\label{BMVOka}
Let f be an {\bf A}-free holomorphic function on an {\bf A}-free open subset
$\Omega$. Suppose that $K$ is an nc subset of $\Omega$ which is compact in the {\bf A}-free topology. Then there exists a sequence 
$\{p_n\}_{n=1}^{\infty}$ of functions in {\bf A} such that $p_n$ converges to $f$ uniformly on $K$.
\end{theorem}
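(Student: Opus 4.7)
The plan is to reduce the theorem to approximating a bounded nc function on a single basic $\mathbf{A}$-free open set, and then to invoke an nc power-series expansion to manufacture the polynomial approximants. First, since $f$ is $\mathbf{A}$-free holomorphic, every $x\in K$ admits a basic $\mathbf{A}$-free open set $\mathbb{D}_{Q_x}\subset \Omega$ containing $x$ on which $f$ is a bounded nc function. Using compactness of $K$ in the $\mathbf{A}$-free topology, I would extract a finite subcover $\mathbb{D}_{Q_1},\ldots,\mathbb{D}_{Q_N}$ of $K$ with $\sup_{\mathbb{D}_{Q_i}}|f|<\infty$ for each $i$.

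The crux of the argument is then to produce a single matrix $Q$, with entries in $\mathbf{A}$, such that $K\subset\mathbb{D}_Q\subset\Omega$ and $f$ extends to a bounded nc function on all of $\mathbb{D}_Q$. The naive choice $Q=Q_1\oplus\cdots\oplus Q_N$ gives $\mathbb{D}_Q=\bigcap_i\mathbb{D}_{Q_i}$, which is the wrong direction, because basic open sets are closed under finite intersection but not under finite union. Instead, one exploits the algebra structure of $\mathbf{A}$: by assembling block matrices whose entries are suitable products of the entries of the $Q_i$'s, one engineers a single matrix $Q$ whose polyhedron $\mathbb{D}_Q$ fits between $K$ and a neighborhood of $K$ contained in $\bigcup_i \mathbb{D}_{Q_i}$. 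Producing such a ``polynomial-polyhedron envelope'' around a free-compact set is the principal obstacle of the proof, and it is precisely where the structural content of the $\mathbf{A}$-free topology --- including the dilation-hull characterization advertised in the abstract --- enters.

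Once $f$ is realized as a bounded nc function on a single basic open set $\mathbb{D}_Q$ containing $K$, the last step is classical in flavor. A bounded nc function on $\mathbb{D}_Q$ has an nc Taylor--Taylor expansion whose partial sums are words in the entries of $Q$; since $\mathbf{A}$ is an algebra containing those entries, these partial sums lie in $\mathbf{A}$, and standard estimates give uniform convergence on every sub-polyhedron of the form $\{z:\|Q(z)\|\leq r\}$ with $r<1$. Because $K$ is $\mathbf{A}$-free compact inside $\mathbb{D}_Q$, it is contained in some such $\{z:\|Q(z)\|\leq r\}$ (by compactness applied to the open cover $\{\|Q\|<r\}_{r<1}$), so the partial sums $p_n$ converge uniformly to $f$ on $K$, as required.
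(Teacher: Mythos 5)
Your overall architecture (reduce to a single basic set $\mathbb{D}_Q$ containing $K$ on which $f$ is bounded nc, then expand the realization formula in a Neumann series) is reasonable, and your final step is sound: since $Q$ has finitely many entries, all in the unital algebra $\mathbf{A}$, each partial sum of the series lies in $\mathbf{A}$, and convergence is geometric, hence uniform, on $\{z:\|Q(z)\|\le r\}$ for $r<1$; this is exactly the mechanism the paper uses in Corollary \ref{pointinterpolation} and in its last theorem. The problem is the step you yourself call the crux: the existence of a single $Q$ with $K\subset\mathbb{D}_Q\subset\Omega$ and $f$ bounded \emph{and nc} on $\mathbb{D}_Q$ is asserted, not proved. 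The strategy you sketch --- assembling block matrices of products of the entries of the $Q_i$ to squeeze a ``polyhedron envelope'' between $K$ and $\bigcup_i\mathbb{D}_{Q_i}$ --- is not a construction, and no such algebraic manipulation of the cover can work on its own, since basic sets are stable under intersections but not unions, as you note. There is a second, hidden gap in the same step: even granted such a $Q$, to invoke the realization theorem (Theorem \ref{realization}) you need $f$ to be an nc function on $\mathbb{D}_Q$, whereas free holomorphy only makes $f$ nc on each $\mathbb{D}_{Q_i}$ separately; a direct sum $x\oplus y$ with $x\in\mathbb{D}_{Q_1}$, $y\in\mathbb{D}_{Q_2}$ need not lie in any member of your cover, so nc-ness on the engineered set does not follow.

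Both gaps close simultaneously once the dilation-hull characterization is used as the \emph{first} step rather than as a vague structural input, and doing so makes your finite subcover irrelevant. By Theorem \ref{freecpt}, the nc compact set $K$ is contained in $DH_{\mathbf{A}}(x)$ for a \emph{single} point $x\in K$. By free holomorphy at that one point, choose a basic set $\mathbb{D}_Q\subset\Omega$ with $x\in\mathbb{D}_Q$ and $f\in H^{\infty}(\mathbb{D}_Q)$ (nc-ness of $f$ on $\mathbb{D}_Q$ is now part of the hypothesis, not something to be checked). If $y\in DH_{\mathbf{A}}(x)$, applying the dilation identity $p(y)=V^*p(x)^{(k)}V$ entrywise to the $\mathbf{A}$-valued entries of $Q$ gives $\|Q(y)\|\le\|Q(x)\|<1$; hence $K\subset DH_{\mathbf{A}}(x)\subset\mathbb{D}_Q$, and in fact $K\subset\{z:\|Q(z)\|\le r\}$ with $r=\|Q(x)\|$, so even your compactness argument for locating $K$ in a sub-polyhedron is unnecessary. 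Your series argument then finishes the proof. Note finally that the paper does not reprove Theorem \ref{BMVOka} as stated (it is quoted from Ball--Marx--Vinnikov); its own machinery, Theorem \ref{oka} and Corollary \ref{roka}, runs precisely this dilation-hull route and yields the strictly stronger conclusion that a single $p\in\mathbf{A}$ agrees with $f$ on all of $K$, so the approximating sequence can be taken constant.
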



\begin{remark}
\upshape Agler and McCarthy \cite[Theorem 9.7]{AMg} proved Theorem \ref{BMVOka} when ${\bf A}=\mathcal{P}_d$. They dealt with polynomially convex compact sets; see Theorem \ref{freecpt} at this point.
\end{remark}


\section{Known facts}

\subsection{A Characterization of {\bf A}-free compact nc sets}

In the rest of this note, we will assume that {\bf A} contains the identity function on $\Xi$, which is the nc function defined by $x\in\Xi_n\mapsto I_n\in\mathbb{M}_n^1$. 

\begin{definition}

An element $y\in\Xi_n$ is an {\bf A}-dilation of another $x\in\Xi_m$ if there is a positive integer $k\in\mathbb{N}$ and an isometry $V:\mathbb{C}^n\rightarrow\mathbb{C}^{km}$ such that for all p in {\bf A},
\begin{equation*}
p(y)=V^*p(x)^{(k)}V.
\end{equation*}
Here, $p(x)^{(k)}$ is the element of $\mathbb{M}_{km}^1$ obtained by taking the direct sum of k copies of $p(x)$.
\end{definition}


\begin{definition}
For an $x\in\Xi$, the {\bf A-dilation hull} $DH_{{\bf A}}(x)$ of $x$ is defined by
\begin{equation*}
DH_{{\bf A}}(x):=\left\{
y\in\Xi\;\middle|\;\mbox{y {\bf A}-dilates to x}\;\right\}.
\end{equation*}
\end{definition}


Augat, Balasubramanian and McCullough \cite{ABM} gave a characterization of free compact nc subsets in $\mathbb{M}^d$. The following result is its slight generalization. 


\begin{theorem}\label{freecpt}

An nc set $K\subset\Xi$ is {\bf A}-free compact if and only if there exists a $x\in K$ such that $K\subset DH_{{\bf A}}(x)$. In particular, a free compact set $K\subset\mathbb{M}^d$ agrees with $DH_{{\bf \mathcal{P}}_d}(x)$ for some $x\in K$ if and only if $K$ is polynomially convex (see \cite[Definition 9.5]{AMg}).
\end{theorem}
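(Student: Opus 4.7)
\emph{Plan.} I base the proof on the characterization $y\in DH_{\mathbf{A}}(x)\iff\|Q(y)\|\le\|Q(x)\|$ for every matrix $Q$ over $\mathbf{A}$. The forward implication follows since $p(y)$ is a compression of $p(x)^{(k)}$; the converse is Arveson's dilation theorem applied to the unital (because $1\in\mathbf{A}$) completely contractive map $p(x)\mapsto p(y)$. I also use that each $\|Q(\cdot)\|$ is upper semicontinuous in the $\mathbf{A}$-free topology, because $\{z:\|Q(z)\|<c\}=\mathbb{D}_{Q/c}$ is basic open.

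$(\Leftarrow)$: Suppose $x\in K$ with $K\subset DH_{\mathbf{A}}(x)$. Given any open cover of $K$, refine to basic open sets and pick $\mathbb{D}_{Q_0}\ni x$ lying inside some member $U_0$ of the cover; then $\|Q_0(x)\|<1$ and the dilation inequality forces $\|Q_0(y)\|\le\|Q_0(x)\|<1$ for every $y\in K$, so $K\subset\mathbb{D}_{Q_0}\subset U_0$ and $\{U_0\}$ is a finite subcover.

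$(\Rightarrow)$: Since $K$ is an nc set, for each finite $F\subset K$ the element $x_F:=\bigoplus_{y\in F}y$ lies in $K$ and satisfies $\|Q(x_F)\|=\max_{y\in F}\|Q(y)\|$. I view the assignment $F\mapsto x_F$ as a net indexed by the finite subsets of $K$ ordered by inclusion; compactness produces a subnet $x_{F_\beta}$ converging to some $x^*\in K$. For any fixed $y\in K$, cofinality forces $y\in F_\beta$ eventually, so $\|Q(x_{F_\beta})\|\ge\|Q(y)\|$ eventually, and upper semicontinuity yields $\|Q(x^*)\|\ge\limsup_\beta\|Q(x_{F_\beta})\|\ge\|Q(y)\|$ for every matrix $Q$ over $\mathbf{A}$. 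By the characterization, $y\in DH_{\mathbf{A}}(x^*)$, so $K\subset DH_{\mathbf{A}}(x^*)$.

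The main obstacle is the norm-to-dilation implication in the characterization; it should be standard (finite-dimensional Stinespring plus the fact that the only $*$-representations of $M_m$ are direct sums of the identity representation), but it must be invoked with care using the hypothesis $1\in\mathbf{A}$. Granting it, the polynomial convexity addendum is immediate: if $x\in K$ and $K\subset DH_{\mathcal{P}_d}(x)$, then $\sup_{z\in K}\|p(z)\|=\|p(x)\|$ for every matrix polynomial $p$, so $DH_{\mathcal{P}_d}(x)$ coincides with the polynomial convex hull $\widehat{K}$; hence $K=DH_{\mathcal{P}_d}(x)$ precisely when $K=\widehat{K}$, i.e.\ when $K$ is polynomially convex.
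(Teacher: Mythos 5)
Your proposal is correct, and its operator-theoretic core coincides with the paper's: the equivalence $y\in DH_{\mathbf{A}}(x)\iff\|Q(y)\|\le\|Q(x)\|$ for all matrices $Q$ over $\mathbf{A}$ is exactly what the paper delegates to Arveson's extension theorem and Choi's theorem via the proof of \cite[Theorem 1.1]{ABM}, and the tools you name (the unital complete contraction $p(x)\mapsto p(y)$, extension to a unital completely positive map on the full matrix algebra, finite-dimensional Stinespring, and the fact that unital representations of a matrix algebra are multiples of the identity) are precisely those cited there; the only detail worth adding is well-definedness of $p(x)\mapsto p(y)$, which follows from your norm inequality applied to the $1\times 1$ matrices $p-q$. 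Where you genuinely differ is in how compactness is used in the forward direction. The argument the paper borrows from \cite{ABM} proceeds by contradiction: if no $x\in K$ worked, each $x\in K$ would have a witness $y_x\in K$ not dilating to it, hence a matrix $Q_x$ over $\mathbf{A}$ with $\|Q_x(x)\|<1<\|Q_x(y_x)\|$; a finite subcover $\mathbb{D}_{Q_{x_1}},\dots,\mathbb{D}_{Q_{x_N}}$ of $K$ then clashes with the single element $y_{x_1}\oplus\cdots\oplus y_{x_N}\in K$, which must lie in some $\mathbb{D}_{Q_{x_j}}$. You instead construct the dilating point directly, as the limit $x^*$ of a convergent subnet of the net $(x_F)$ of direct sums over finite subsets of $K$, and harvest $\|Q(x^*)\|\ge\|Q(y)\|$ from upper semicontinuity of $z\mapsto\|Q(z)\|$. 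Your version is sound: compactness yields convergent subnets with limits in $K$ without any Hausdorff hypothesis (the free topology is not Hausdorff), a subnet's index map eventually dominates every finite subset, and the semicontinuity inequality points the right way because $\{z\,:\,\|Q(z)\|<c\}$ is basic open. The trade-off is that your route identifies the dilating point concretely as a cluster point of finite direct sums, at the cost of net/subnet machinery, whereas the covering argument is more elementary, needing only finite covers and one finite direct sum. Both routes then dispatch the polynomial-convexity addendum identically: once $\sup_{z\in K}\|p(z)\|=\|p(x)\|$ for all matrix polynomials $p$, the norm characterization identifies $DH_{\mathcal{P}_d}(x)$ with the polynomial hull of $K$.
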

\begin{proof}
This is proved in the same way as the proof of Theorem 1.1 of \cite{ABM} because only the following were used there: the nc property of subsets (i.e., closed under the direct sums), and two general results, Arveson's extension theorem (\cite[Corollary 7.6]{Pau}) and Choi's Theorem (\cite[Proposition 4.7]{Pau}). To apply Corollary 7.6 of \cite{Pau}, we have to assume that {\bf A} contains the identity function.
\end{proof}


\subsection{Realization formula for the nc Schur-Agler class}

An {\bf A}-free holomorphic function is locally an nc function that is bounded on some basic {\bf A}-free open set $\mathbb{D}_Q$. Therefore, it is natural to study $H^{\infty}(\mathbb{D}_Q)$, the bounded nc functions on $\mathbb{D}_Q$. The {\bf nc Schur-Agler class}, $\mathcal{SA}(\mathbb{D}_Q)$ defined by
\begin{equation*}
\mathcal{SA}(\mathbb{D}_Q):=\left\{f:\mathbb{D}_Q\rightarrow\mathbb{M}^1\;\middle|\;\mbox{$f$ is nc and} \displaystyle\sup_{z\in\mathbb{D}_Q}\|f(z)\|\le 1\;\right\}
\end{equation*}
is studied by Agler and McCarthy  \cite{AMg} in the matrix case and by Ball, Marx and Vinnikov \cite{BMV} in general. They showed that each element of the nc Schur-Agler class admits a realization formula.


\begin{theorem}{$($\cite[Corollary 3.4]{BMV}$)$}\label{realization}
Let $\mathbb{D}_Q$ be a basic {\bf A}-free open set of $\Xi$ (the size of the matrix $Q$ is $s\times r$), and let $f$ be a graded function from $\mathbb{D}_Q$ into $\mathbb{M}^1$. Then the following conditions are equivalent:
\begin{enumerate}
\item $f\in\mathcal{SA}(\mathbb{D}_Q)$.
\item There exist an auxiliary Hilbert space $\mathcal{X}$ and a unitary operator
\begin{equation*}
U=
\begin{bmatrix}
A&B\\
C&D
\end{bmatrix}:
\begin{bmatrix}
\mathbb{C}^s\otimes\mathcal{X}\\
\mathbb{C}
\end{bmatrix}\rightarrow
\begin{bmatrix}
\mathbb{C}^r\otimes\mathcal{X}\\
\mathbb{C}
\end{bmatrix}
\end{equation*}
such that for all $z\in(\mathbb{D}_Q)_n$, 
\begin{equation*}
f(z)=
\begin{matrix}
D\\
\otimes\\
I_n
\end{matrix}
+
\begin{matrix}
C\\
\otimes\\
I_n
\end{matrix}\left(
\begin{matrix}
I_{\mathcal{X}}\\
\otimes\\
I_{n\times s}
\end{matrix}
-
\begin{matrix}
I_{\mathcal{X}}\\
\otimes\\
Q(z)
\end{matrix}
\begin{matrix}
A\\
\otimes\\
I_n
\end{matrix}
\right)^{-1}
\begin{matrix}
I_{\mathcal{X}}\\
\otimes\\
Q(z)
\end{matrix}
\begin{matrix}
B\\
\otimes\\
I_n
\end{matrix}\;\;.
\end{equation*}
\end{enumerate}
\end{theorem}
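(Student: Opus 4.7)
The plan is to prove the equivalence (1)$\Leftrightarrow$(2) in two stages, with (2)$\Rightarrow$(1) a routine transfer-function calculation and (1)$\Rightarrow$(2) the substantive content. For (2)$\Rightarrow$(1), I would write $T(z):=(I_{\mathcal{X}}\otimes Q(z))(A\otimes I_n)$, observe that $\|T(z)\|\le\|Q(z)\|<1$ on $\mathbb{D}_Q$ so that $(I-T(z))^{-1}$ exists via Neumann series, and then compute $I-f(z)f(z)^*$ directly. Using the four block identities coming from $UU^*=I$ and $U^*U=I$ (e.g.\ $AA^*+BB^*=I$, $CA^*+DB^*=0$, $CC^*+DD^*=I$), a standard algebraic rearrangement converts $I-f(z)f(z)^*$ into an expression of the form
\[
(C\otimes I_n)(I-T(z))^{-1}\bigl(I_{\mathcal{X}}\otimes(I-Q(z)Q(z)^*)\bigr)(I-T(z))^{-*}(C\otimes I_n)^*,
\]
which is manifestly positive because $\|Q(z)\|<1$. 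Hence $\|f(z)\|\le 1$ for every $z\in\mathbb{D}_Q$, and the fact that $f$ is a graded nc function is immediate from the tensor structure of the formula.

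For (1)$\Rightarrow$(2) the strategy is the Agler--decomposition plus lurking-isometry recipe, adapted to the nc setting. First, I would establish an nc Agler factorization: if $f\in\mathcal{SA}(\mathbb{D}_Q)$, then there exists a completely positive nc kernel $\Gamma$ with values in $B(\mathbb{C}^s,\mathbb{C}^s)$ such that for all $z,w\in\mathbb{D}_Q$,
\[
I-f(z)f(w)^*=\Gamma(z,w)\bullet\bigl(I-Q(z)Q(w)^*\bigr),
\]
interpreted suitably on the graded/matrix levels. The derivation is a Hahn--Banach separation in the cone of positive nc kernels on $\mathbb{D}_Q$: if no such $\Gamma$ existed, a separating functional would produce a finite list of points $(z^{(1)},\dots,z^{(N)})$ in some $(\mathbb{D}_Q)_n$ at which $I-f(z^{(i)})f(z^{(j)})^*$ is not dominated in the positivity order by any multiple of $I-Q(z^{(i)})Q(z^{(j)})^*$, contradicting $\|f\|_{\infty}\le 1$ after passing to the direct-sum point $z^{(1)}\oplus\cdots\oplus z^{(N)}\in\mathbb{D}_Q$ (where $\|Q\|<1$ holds). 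This is where the nc structure is actually an advantage: all sizes are accessible simultaneously through direct sums.

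Given the decomposition, I would apply a Kolmogorov factorization to $\Gamma$ to obtain an auxiliary Hilbert space $\mathcal{X}$ and an $\mathcal{X}$-valued nc function $H$ with
\[
\Gamma(z,w)\bullet\bigl(I-Q(z)Q(w)^*\bigr)=H(z)\bigl(I-(I_{\mathcal{X}}\otimes Q(z))(I_{\mathcal{X}}\otimes Q(w))^*\bigr)H(w)^*.
\]
Substituting into the Agler identity and rearranging produces
\[
I+H(z)(I_{\mathcal{X}}\otimes Q(z))(I_{\mathcal{X}}\otimes Q(w))^*H(w)^*
=f(z)f(w)^*+H(z)H(w)^*,
\]
which says exactly that the map sending
$\bigl((I_{\mathcal{X}}\otimes Q(w))^*H(w)^*\xi,\xi\bigr)\mapsto\bigl(H(w)^*\xi,f(w)^*\xi\bigr)$
is a well-defined isometry $V$ on the closure of its domain. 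Extending $V$ to a unitary $U=\begin{bmatrix}A&B\\C&D\end{bmatrix}$ on all of $(\mathbb{C}^s\otimes\mathcal{X})\oplus\mathbb{C}\to(\mathbb{C}^r\otimes\mathcal{X})\oplus\mathbb{C}$ (enlarging $\mathcal{X}$ if necessary so the dimensions match), and reading off the resulting algebraic relations, one solves for $H(z)$ and $f(z)$ to recover exactly the transfer-function formula in (2).

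The hard part will be the Agler decomposition step. The separation argument must be carried out in a space of nc kernels large enough to include direct sums of matrix points of all sizes but small enough that the separating functional can be evaluated pointwise; pinning down the right locally convex topology on this cone and showing that an obstructing functional really leads to a violation of $\|f\|_{\infty}\le 1$ requires genuine care. Once this is in hand, the Kolmogorov factorization and the lurking-isometry-to-unitary extension are standard, and the remaining verification that the reconstructed $f$ agrees with the original on $\mathbb{D}_Q$ is a direct algebraic check against the isometry identities satisfied by $V$.
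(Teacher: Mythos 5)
The paper does not prove this theorem at all: it quotes it verbatim from Ball--Marx--Vinnikov \cite[Corollary 3.4]{BMV}, and the proof given there follows essentially the route you outline --- the easy direction (2)$\Rightarrow$(1) by the unitarity block identities yielding a positive kernel factorization of $I-f(z)f(w)^*$, and the substantive direction (1)$\Rightarrow$(2) by an Agler-type decomposition in terms of completely positive nc kernels (obtained by a cone-separation argument in which direct sums of matrix points play exactly the role you describe), followed by Kolmogorov factorization and a lurking-isometry-to-unitary extension. Your proposal is therefore a faithful reconstruction of the cited proof's strategy, with the genuinely hard step (the nc Agler decomposition and the topology on the cone of cp nc kernels) correctly identified, though, as you acknowledge, left as a sketch rather than carried out.
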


Here is a simple but important observation that will crucially be used later.


\begin{cor}\label{pointinterpolation}
Let $\mathbb{D}_Q$ be a basic {\bf A}-free open set of $\Xi$ and let
 $f\in H^{\infty}(\mathbb{D}_Q)$, that is, $f$ is a bounded nc function from $\mathbb{D}_Q$ into $\mathbb{M}^1$. Then, for any 
 $z\in\mathbb{D}_Q$, there exists a $p\in {\bf A}$ such that $f(z)=p(z)$.
\end{cor}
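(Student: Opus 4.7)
The plan is to reduce to the nc Schur--Agler class by scaling, then exploit Theorem \ref{realization} together with a finite-dimensionality argument for the evaluation image of $\mathbf{A}$ at $z$. For the trivial case $f\equiv 0$ one may take $p=0\in\mathbf{A}$; otherwise one may rescale so that $f\in\mathcal{SA}(\mathbb{D}_Q)$, since if $\tilde f(z)=\tilde p(z)$ for the rescaled $\tilde f=f/\|f\|_\infty$ and some $\tilde p\in\mathbf{A}$, then $p:=\|f\|_\infty\tilde p\in\mathbf{A}$ realizes the original $f(z)$. Thus Theorem \ref{realization} furnishes a Hilbert space $\mathcal{X}$ and a unitary $U=\left[\begin{smallmatrix}A&B\\C&D\end{smallmatrix}\right]$ giving a transfer-function representation of $f(z)$.

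The central object is the evaluation algebra $\mathcal{B}:=\{p(z):p\in\mathbf{A}\}\subseteq\mathbb{C}^{n\times n}$. Since evaluation at $z$ is an algebra homomorphism and $\mathbf{A}$ is unital, $\mathcal{B}$ is a unital subalgebra of $\mathbb{C}^{n\times n}$; as a linear subspace of a finite-dimensional space it is automatically finite-dimensional and norm-closed, and every entry $q_{ij}(z)$ of $Q(z)$ lies in $\mathcal{B}$.

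The key step is to show $f(z)\in\mathcal{B}$. Writing $\mathcal{H}:=\mathbb{C}^s\otimes\mathcal{X}$, one has the decomposition $Q(z)=\sum_{i,j}E_{ij}\otimes q_{ij}(z)$ with $E_{ij}$ the matrix units, which shows $(I_{\mathcal{X}}\otimes Q(z))(A\otimes I_n)\in\mathcal{B}(\mathcal{H})\otimes\mathcal{B}$; the $z$-dependence enters only through $\mathcal{B}$-valued coefficients, while $A\otimes I_n$ leaves the $\mathcal{B}$-factor unchanged. Since $\mathcal{B}$ is finite-dimensional, $\mathcal{B}(\mathcal{H})\otimes\mathcal{B}$ is a norm-closed subspace of $\mathcal{B}(\mathcal{H}\otimes\mathbb{C}^n)$ (picking a basis of $\mathcal{B}$ identifies it with $\mathcal{B}(\mathcal{H})^{\dim\mathcal{B}}$). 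The Neumann series for the inverse converges in operator norm (since $\|Q(z)\|<1$ and $\|A\|\le 1$ as a block of a unitary), each partial sum lies in $\mathcal{B}(\mathcal{H})\otimes\mathcal{B}$, and closedness gives that the inverse does too. Multiplying on the right by $(I_{\mathcal{X}}\otimes Q(z))(B\otimes I_n)\in\mathcal{B}(\mathbb{C},\mathcal{H})\otimes\mathcal{B}$ and on the left by $(C\otimes I_n)\in\mathcal{B}(\mathcal{H},\mathbb{C})\otimes\mathcal{B}$, and using that $\mathcal{B}$ is closed under products, yields an element of $\mathcal{B}(\mathbb{C})\otimes\mathcal{B}=\mathcal{B}$. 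Adding $D\cdot I_n\in\mathcal{B}$ concludes $f(z)\in\mathcal{B}$, which is precisely the desired statement.

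The main obstacle, as I anticipate it, is the tensor-product bookkeeping needed to certify at each step that the result lives in the correct $\mathcal{B}(\,\cdot\,)\otimes\mathcal{B}$ slot; conceptually the argument is only that the $z$-dependence of the realization is concentrated in $Q(z)$, whose coefficients lie in the finite-dimensional subalgebra $\mathcal{B}$, and every remaining factor commutes with this decomposition.
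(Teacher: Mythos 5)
Your proposal is correct and takes essentially the same route as the paper's proof: reduce to the Schur--Agler class, invoke the realization formula of Theorem \ref{realization}, expand via the Neumann series (convergent since $\|Q(z)\|<1$), and conclude by norm-closedness of the finite-dimensional evaluation image $\{p(z):p\in\mathbf{A}\}\subseteq\mathbb{C}^{n\times n}$. The only difference is where the limit is taken --- the paper evaluates the partial sums as elements of $\mathbf{A}$ at $z$ and closes up inside $\mathbb{C}^{n\times n}$, while you close up inside $\mathcal{B}(\mathcal{H})\otimes\mathcal{B}$ and contract afterwards --- which is the same argument shifted by one tensor factor.
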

\begin{proof}

We may and do assume that $f\in\mathcal{SA}(\mathbb{D}_Q)$. Since $\|Q(z)\|<1$ for $z\in(\mathbb{D}_Q)_n$, we have the following realization for $f$ as an infinite series:
\begin{equation*}
f(z)=
\begin{matrix}
D\\
\otimes\\
I_n
\end{matrix}
+\sum_{k=0}^{\infty}
\begin{matrix}
C\\
\otimes\\
I_n
\end{matrix}\left(
\begin{matrix}
I_{\mathcal{X}}\\
\otimes\\
Q(z)
\end{matrix}
\begin{matrix}
A\\
\otimes\\
I_n
\end{matrix}
\right)^k
\begin{matrix}
I_{\mathcal{X}}\\
\otimes\\
Q(z)
\end{matrix}
\begin{matrix}
B\\
\otimes\\
I_n
\end{matrix}\;\;.
\end{equation*}
Since the matrix entries of $Q$ are all in the unital algebra {\bf A}, it follows that each partial sum $p_n$ of the infinite series falls in {\bf A}. Since $\{p(z)\in\mathbb{M}_n^1\;|\;p\in{\bf A}\}$ is a subspace of $\mathbb{M}_n^1$, it is automatically closed in the norm topology and there exists a $p\in {\bf A}$ such that $p(z)=\displaystyle\lim_{n\to\infty} p_n(z)=f(z)$.
\end{proof}
\begin{remark}
\upshape Agler, McCarthy and Young \cite[Theorem 14.4]{AMY} proved this result by using the Hahn-Banach theorem in the matrix case.
\end{remark}


\section{Main results}

We will introduce an {\bf A}-Zariski closure of a singleton. For a $\lambda\in\Xi$, we define the ideal
\begin{equation*}
I_{\lambda}:=\{p\in{\bf A}\;|\;p(\lambda)=0\}
\end{equation*}
and the {\bf A}-Zariski closure of $\lambda$ by
\begin{equation*}
V_{\lambda}:=\{x\in\Xi\;|\;p(x)=0\;\mbox{whenever}\;p\in I_{\lambda}\}.
\end{equation*}

Here is the main observation of this note.


\begin{theorem}\label{oka}
Let $f$ be a bounded graded function from a basic {\bf A}-free open set $\mathbb{D}_Q$ into $\mathbb{M}^1$. Then the following conditions are equivalent:
\begin{enumerate}
\item $f$ is nc, that is, $f\in H^{\infty}(\mathbb{D}_Q)$.
\item For any $\lambda\in\mathbb{D}_Q$, there exists a $p\in{\bf A}$ such that $f$ coincides with $p$ on $V_{\lambda}\cap\mathbb{D}_Q$.
\item For each {\bf A}-free compact nc subset $K$ of $\mathbb{D}_Q$, there exists a $p\in{\bf A}$ such that $f$ coincides with $p$ on $K$.
\end{enumerate}
\end{theorem}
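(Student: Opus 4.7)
The plan is to establish the cycle $(1)\Rightarrow(2)\Rightarrow(3)\Rightarrow(1)$. The key inputs are Corollary~\ref{pointinterpolation} (pointwise representability by elements of ${\bf A}$) and Theorem~\ref{freecpt} (the dilation-hull characterization of ${\bf A}$-free compact nc sets); the unifying theme is a direct-sum trick enabled by the fact that $\mathbb{D}_Q$ is an nc set, so $\|Q(x\oplus y)\|=\max(\|Q(x)\|,\|Q(y)\|)<1$ whenever $x,y\in\mathbb{D}_Q$.

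For $(1)\Rightarrow(2)$, I would fix $\lambda\in\mathbb{D}_Q$, apply Corollary~\ref{pointinterpolation} to obtain $p\in{\bf A}$ with $p(\lambda)=f(\lambda)$, and then show $p=f$ on $V_\lambda\cap\mathbb{D}_Q$ as follows. For any $y\in V_\lambda\cap\mathbb{D}_Q$, Corollary~\ref{pointinterpolation} applied at the point $\lambda\oplus y\in\mathbb{D}_Q$ produces $q\in{\bf A}$ with $q(\lambda\oplus y)=f(\lambda\oplus y)$. Since both $f$ (by (1)) and $q$ respect direct sums, this identity splits as $q(\lambda)=f(\lambda)=p(\lambda)$ and $q(y)=f(y)$. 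Hence $p-q\in I_\lambda$, and as $y\in V_\lambda$ this forces $p(y)=q(y)=f(y)$.

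For $(2)\Rightarrow(3)$, given an ${\bf A}$-free compact nc set $K\subset\mathbb{D}_Q$, Theorem~\ref{freecpt} furnishes $x_0\in K$ with $K\subset DH_{\bf A}(x_0)$. The defining identity $p(y)=V^*p(x_0)^{(k)}V$ for $y\in DH_{\bf A}(x_0)$ immediately yields $DH_{\bf A}(x_0)\subset V_{x_0}$ (any $p\in I_{x_0}$ vanishes on the hull). Applying (2) at $x_0$ then delivers $p\in{\bf A}$ with $f=p$ on $V_{x_0}\cap\mathbb{D}_Q\supset K$.

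For $(3)\Rightarrow(1)$, I would verify the direct-sum and similarity conditions defining the nc property of $f$. Given $x,y\in\mathbb{D}_Q$, the set $K:=DH_{\bf A}(x\oplus y)$ is ${\bf A}$-free compact by Theorem~\ref{freecpt} (with base point $x\oplus y$), sits inside $\mathbb{D}_Q$, and contains $x$, $y$, and $x\oplus y$ via the obvious coordinate-projection isometries with $k=1$. Applying (3) to $K$ yields $p\in{\bf A}$ with $f=p$ on $K$, whence $f(x\oplus y)=p(x\oplus y)=p(x)\oplus p(y)=f(x)\oplus f(y)$; the similarity property follows analogously from $f(y)=p(y)=\alpha p(x)\alpha^{-1}=\alpha f(x)\alpha^{-1}$ when $y=\alpha x\alpha^{-1}$, using that $p\in{\bf A}$ is nc. No genuine obstacle arises: once the direct-sum reduction is identified, every step is a short bookkeeping exercise in the two named tools, with a little care about the isometry dimensions that appear in the dilation hulls.
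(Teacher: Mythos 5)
Your proposal is correct and follows essentially the same route as the paper: Corollary~\ref{pointinterpolation} plus the direct-sum splitting for $(1)\Rightarrow(2)$, Theorem~\ref{freecpt} together with $DH_{{\bf A}}(\lambda)\subset V_{\lambda}$ for $(2)\Rightarrow(3)$, and the observation that $x,y\in DH_{{\bf A}}(x\oplus y)$, an {\bf A}-free compact nc subset of $\mathbb{D}_Q$, for $(3)\Rightarrow(1)$. The only cosmetic difference is that in $(3)\Rightarrow(1)$ the paper verifies directly that $f$ respects intertwinings, whereas you verify the equivalent pair of conditions (respecting direct sums and similarities); both rest on the same application of $(3)$ to $DH_{{\bf A}}(x\oplus y)$.
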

\begin{proof}
(1)$\Rightarrow$(2). For any $\lambda\in\mathbb{D}_Q$, Corollary \ref{pointinterpolation} shows that there exists a $p\in{\bf A}$ such that $f(\lambda)=p(\lambda)$. If $x\in V_{\lambda}\cap\mathbb{D}_Q$, then $x\oplus\lambda$ is in $\mathbb{D}_Q$, and therefore, there is a $q\in{\bf A}$ such that $f(x\oplus\lambda)=q(x\oplus\lambda)$. Since $f$ and $q$ respect direct sums, $f(x)=q(x)$ and $f(\lambda)=q(\lambda)$ hold. Therefore, $p-q$ is in $I_{\lambda}$ and $f(x)=q(x)=p(x)$ due to $x\in V_{\lambda}$.\\
(2)$\Rightarrow$(3). By Theorem \ref{freecpt}, each {\bf A}-free compact nc subset $K$ of $\mathbb{D}_Q$ must sit in the {\bf A}-dilation hull $DH_{{\bf A}}(\lambda)$ of a $\lambda\in K$. By definition, $DH_{{\bf A}}(\lambda)$ sits in $V_{\lambda}$. Hence $K\subset V_{\lambda}\cap\mathbb{D}_Q$. Thus item (2) implies item (3).\\
(3)$\Rightarrow$(1). It is sufficient to prove that $f$ respects intertwinings. Let $x, y\in\mathbb{D}_Q$ and let $\alpha$ be a matrix with $\alpha x=y\alpha$. Obviously, $x\oplus y$ is in $\mathbb{D}_Q$. Since all elements of {\bf A} respect direct sums, we have
\begin{equation*}
p(x)=
\begin{bmatrix}
I&0
\end{bmatrix}
\begin{bmatrix}
p(x)&0\\
0&p(y)
\end{bmatrix}
\begin{bmatrix}
I\\
0
\end{bmatrix}
=
\begin{bmatrix}
I&0
\end{bmatrix}p\left(
\begin{bmatrix}
x&0\\
0&y
\end{bmatrix}\right)
\begin{bmatrix}
I\\
0
\end{bmatrix}
\end{equation*}
for all $p\in{\bf A}$. This implies $x\in DH_{{\bf A}}(x\oplus y)$. 
By the same calculation, $y$ also belongs to $DH_{{\bf A}}(x\oplus y)$. Since
 $DH_{{\bf A}}(x\oplus y)$ is an nc {\bf A}-free compact subset of 
$\mathbb{D}_Q$, there exists a $p\in{\bf A}$ such that $f(x)=p(x)$ and $f(y)=p(y)$. As $p$ respects intertwinings, we obtain that
\begin{equation*}
\alpha f(x)=\alpha p(x)=p(y)\alpha=f(y)\alpha.
\end{equation*}
Therefore, $f$ respects intertwinings.
\end{proof}


Here is a simple corollary, which is nothing but a refined nc Oka-Weil theorem.

\begin{cor}\label{roka}
Let $f$ be an {\bf A}-free holomorphic function on an {\bf A}-free open set $\Omega$. If $K$ is an {\bf A}-free compact nc subset of $\Omega$, then there exists a $p\in{\bf A}$ such that $f$ coincides with $p$ on $K$. In particular, any free holomorphic function coincides with some free polynomial on each free compact nc subset.
\end{cor}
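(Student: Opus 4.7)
The plan is to reduce Corollary~\ref{roka} to a direct application of Theorem~\ref{oka} by producing a single basic $\mathbf{A}$-free open set $\mathbb{D}_Q$ that contains $K$ and on which $f$ is a bounded nc function. Once such a $\mathbb{D}_Q$ is in hand, condition~(1) of Theorem~\ref{oka} holds for $f|_{\mathbb{D}_Q}$, so condition~(3) delivers the required $p\in\mathbf{A}$ with $f=p$ on $K$.

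To manufacture such a $\mathbb{D}_Q$, I would first invoke Theorem~\ref{freecpt} on the $\mathbf{A}$-free compact nc set $K$ to obtain a point $\lambda\in K$ with $K\subset DH_{\mathbf{A}}(\lambda)$. Since $\lambda\in\Omega$ and $f$ is $\mathbf{A}$-free holomorphic, the definition of $\mathbf{A}$-free holomorphy supplies a basic $\mathbf{A}$-free open set $\mathbb{D}_Q$ containing $\lambda$ on which $f$ is bounded and nc.

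The crux of the argument is the automatic inclusion $DH_{\mathbf{A}}(\lambda)\subset\mathbb{D}_Q$. If $y\in DH_{\mathbf{A}}(\lambda)$ is witnessed by an isometry $V$ and a multiplicity $k$, then applying the dilation identity $p(y)=V^{*}p(\lambda)^{(k)}V$ to each entry of $Q$ (all of which lie in $\mathbf{A}$) realizes $Q(y)$ as an isometric compression of the block-diagonal matrix $Q(\lambda)^{(k)}$. Consequently
\[
\|Q(y)\|\le\|Q(\lambda)^{(k)}\|=\|Q(\lambda)\|<1,
\]
so $y\in\mathbb{D}_Q$. Thus $K\subset DH_{\mathbf{A}}(\lambda)\subset\mathbb{D}_Q$, and Theorem~\ref{oka}(3) completes the proof. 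The ``in particular'' clause is the specialization $\mathbf{A}=\mathcal{P}_d$ on $\Xi=\mathbb{M}^d$, combined with the second assertion of Theorem~\ref{freecpt}.

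The main obstacle I anticipate is precisely this localization. Since $\mathbf{A}$-free holomorphy is only a local property, $\mathbf{A}$-free compactness of $K$ a priori only furnishes a finite cover of $K$ by basic open sets on which $f$ is bounded nc, not a single such set. Theorem~\ref{freecpt} resolves this by singling out a distinguished $\lambda\in K$ whose dilation hull already contains $K$; the norm estimate above then promotes any local basic open set around $\lambda$ into one that surrounds all of $K$, so no further cover-refinement is needed.
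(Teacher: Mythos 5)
Your proposal is correct and follows essentially the same route as the paper: invoke Theorem~\ref{freecpt} to get $\lambda\in K$ with $K\subset DH_{\mathbf{A}}(\lambda)$, use $\mathbf{A}$-free holomorphy at $\lambda$ to obtain $\mathbb{D}_Q$ with $f\in H^{\infty}(\mathbb{D}_Q)$, verify $DH_{\mathbf{A}}(\lambda)\subset\mathbb{D}_Q$, and conclude via Theorem~\ref{oka}. The only difference is that you spell out the norm estimate $\|Q(y)\|\le\|Q(\lambda)^{(k)}\|=\|Q(\lambda)\|<1$ behind the inclusion, which the paper dismisses as ``easy to see,'' and you apply Theorem~\ref{oka}(3) to $K$ directly rather than replacing $K$ by $DH_{\mathbf{A}}(\lambda)$; both are harmless variants.
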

\begin{proof}
By Theorem \ref{freecpt}, we may assume that $K=DH_{{\bf A}}(x)$ for some $x\in\Omega$. Since $f$ is {\bf A}-free holomorphic, there exists a basic {\bf A}-free open subset
 $\mathbb{D}_Q\subset\Omega$ that contains $x$, and moreover, $f\in H^{\infty}(\mathbb{D}_Q)$. By the definition of an {\bf A}-dilation hull, it is easy to see that $DH_{{\bf A}}(x)$ sits in $\mathbb{D}_Q$. Hence, Theorem \ref{oka} implies that we can find an element of {\bf A} that agrees with $f$ on $DH_{{\bf A}}(x)$. 
\end{proof}


We have seen that every element of the nc Schur-Agler class agrees with a function in {\bf A} on each {\bf A}-free compact nc subset (Theorem \ref{oka}). This was a consequence from Theorem \ref{freecpt}. Corollary \ref{roka} suggests that free compact nc subsets should be regarded as nc counterparts of finite subsets (rather than compact subsets) in the complex plane. In fact, any function on the complex plane coincides with a polynomial on a finite subset. We will briefly explain a cryptic phenomenon of free holomorphic functions from this viewpoint. Pascoe \cite{PasE} proved that if $d>1$, then there is an entire free holomorphic function (hence it is free continuous due to \cite[Proposition 3.8]{AMYm}) which is unbounded on the row ball. This phenomenon never occurs in the classical setting. Since the closed row ball is nc, our observation here says that the closed row ball is not free compact. In this respect,  Pascoe already pointed out the same kind of flaw in the free topology in \cite{PasE}. Moreover, Agler, McCarthy and Young pointed out another flaw, that is, the free topology is not Hausdorff \cite[Corollary 7.6 and Proposition 7.13]{AMYm}. 

Recall that any compact subsets in the finite topology must be compact in the free topology and also that the finite topology is, without doubt, a natural one because it can be regarded as an nc analogue of the usual Euclidean topology. Thus due to the above observation showing that any free compact nc subsets are too small in some sense, it seems natural to discuss the Oka-Weil type polynomial approximation on free compact subsets that are not necessarily nc. Here we examine Pascoe's observation (See \cite[Page 20]{Pas}) for this problem. By Theorem \ref{realization}, if $f\in\mathcal{SA}(\mathbb{D}_Q)$, then $f$ is realized as
\begin{equation*}
f(z)=
\begin{matrix}
D\\
\otimes\\
I_n
\end{matrix}
+
\begin{matrix}
C\\
\otimes\\
I_n
\end{matrix}\left(
\begin{matrix}
I_{\mathcal{X}}\\
\otimes\\
I_{n\times s}
\end{matrix}
-
\begin{matrix}
I_{\mathcal{X}}\\
\otimes\\
Q(z)
\end{matrix}
\begin{matrix}
A\\
\otimes\\
I_n
\end{matrix}
\right)^{-1}
\begin{matrix}
I_{\mathcal{X}}\\
\otimes\\
Q(z)
\end{matrix}
\begin{matrix}
B\\
\otimes\\
I_n
\end{matrix}\;\;.
\end{equation*}
We then define elements of {\bf A} by
\begin{equation*}
f_{N,r}(z)=
\begin{matrix}
D\\
\otimes\\
I_n
\end{matrix}
+\sum_{k=0}^N
\begin{matrix}
C\\
\otimes\\
I_n
\end{matrix}\left(
\begin{matrix}
I_{\mathcal{X}}\\
\otimes\\
rQ(z)
\end{matrix}
\begin{matrix}
A\\
\otimes\\
I_n
\end{matrix}
\right)^k
\begin{matrix}
I_{\mathcal{X}}\\
\otimes\\
rQ(z)
\end{matrix}
\begin{matrix}
B\\
\otimes\\
I_n
\end{matrix}\;\;
\end{equation*}
where $0<r<1$. Note that for each $0<r<1$, there is an $N_0$ such that if $N\ge N_0$ then $\displaystyle\sup_{z\in\mathbb{D}_Q}\|rf_{N,r}(z)\|\le 1$. In this way, we can prove the following fact.


\begin{theorem}
A graded function $f$ from a basic {\bf A}-free open set $\mathbb{D}_Q$ into $\mathbb{M}^1$ belongs to $\mathcal{SA}(\mathbb{D}_Q)$ if and only if there exists a sequence 
$\{p_n\}_{n=1}^{\infty}$ in {\bf A} such that $p_n$ converges to
 $f$ uniformly on each (not necessarily nc) {\bf A}-free compact subset of $\mathbb{D}_Q$, and the norm of $p_n$ is uniformly less than one.
 \end{theorem}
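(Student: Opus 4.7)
The plan splits into the easy and hard implications. The easy direction ($\Leftarrow$) is almost free: every singleton $\{z\} \subset \mathbb{D}_Q$ is compact in any topology, so uniform convergence on $\mathbf{A}$-free compact subsets forces pointwise convergence $p_n(z) \to f(z)$, and the uniform norm bound $\|p_n\|_\infty \leq 1$ passes pointwise to $\|f(z)\| \leq 1$. The nc property of $f$ is inherited because each $p_n \in \mathbf{A}$ is nc, so $\alpha p_n(x) = p_n(y)\alpha$ whenever $\alpha x = y\alpha$ in $\mathbb{D}_Q$, and pointwise limits preserve this intertwining identity; combined with the gradedness of $f$ assumed in the statement, this places $f$ in $\mathcal{SA}(\mathbb{D}_Q)$.

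For the hard direction ($\Rightarrow$) I would make precise the construction telegraphed in the paragraph preceding the theorem. I start by realizing $f \in \mathcal{SA}(\mathbb{D}_Q)$ via Theorem \ref{realization} and forming the partial sums $f_{N,r}$, which lie in $\mathbf{A}$ since the entries of $Q$ do. Then I choose $r_n \nearrow 1$ (say $r_n = 1 - 1/n$) and pick $N_n$ large enough that simultaneously (a) $\sup_{z \in \mathbb{D}_Q} \|r_n f_{N_n, r_n}(z)\| \leq 1$, as the quoted observation guarantees, and (b) $\|f_{N_n, r_n} - f_{r_n}\|_\infty \leq 1/n$ on $\mathbb{D}_Q$, where $f_r$ denotes the full Neumann series with $Q$ replaced by $rQ$ (the tail being uniformly $O(r^{N}/(1-r))$ on $\mathbb{D}_Q$ since $A$, $B$, $C$ are contractions coming from a unitary). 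Setting $p_n := r_n f_{N_n, r_n} \in \mathbf{A}$ gives $\|p_n\|_\infty \leq 1$ uniformly in $n$.

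The genuinely new input, and what I expect to be the main obstacle, is upgrading convergence on each $\mathbf{A}$-free compact subset $K \subset \mathbb{D}_Q$ from pointwise to uniform. The key topological observation is that $\mathbb{D}_Q$ is exhausted by the nested family of basic $\mathbf{A}$-free open sets $\{z : \|Q(z)\| < c\} = \mathbb{D}_{(1/c)Q}$, $c \in (0,1)$, each basic because $\mathbf{A}$ is a $\mathbb{C}$-algebra; compactness of $K$ against this nested cover forces $K \subset \{\|Q(z)\| < c\}$ for some $c < 1$. Taking the termwise difference $f - r_n f_{r_n}$ in the realization expansion, each summand is bounded in operator norm by $(1 - r_n^{k+2})\, c^{k+1}$ on $K$, a sequence dominated by the summable $c^{k+1}$ and tending to zero as $r_n \nearrow 1$; dominated convergence then yields $r_n f_{r_n} \to f$ uniformly on $K$, and combining with (b) gives $p_n \to f$ uniformly on $K$. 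The remaining bookkeeping, in particular the simultaneous choice of $N_n$ securing both (a) and (b), is routine because both conditions are monotone in $N_n$ and uniform on the whole of $\mathbb{D}_Q$.
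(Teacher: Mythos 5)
Your proof is correct and follows essentially the route the paper itself takes: the paper's (sketched) argument is exactly the $f_{N,r}$ construction with the bound $\sup_{z\in\mathbb{D}_Q}\|rf_{N,r}(z)\|\le 1$, and your additions --- the exhaustion argument showing every $\mathbf{A}$-free compact $K$ lies in some $\mathbb{D}_{(1/c)Q}$ with $c<1$, the geometric tail estimates, and the pointwise/intertwining limit argument for the converse --- are precisely the details the paper's phrase ``in this way, we can prove the following fact'' leaves to the reader. No gaps: the diagonal choice of $r_n\nearrow 1$ and $N_n$ satisfying both the norm bound and the $1/n$-tail bound is legitimate since both conditions hold for all sufficiently large $N$ at fixed $r$.
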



In closing we would like to present the following natural question. If it was affirmative, one would be able to find the $p\in{\bf A}$ in Theorem \ref{oka} with the additional property $\displaystyle\sup_{z\in\mathbb{D}_Q}\|p(z)\|\le\displaystyle\sup_{z\in\mathbb{D}_Q}\|f(z)\|$.

\begin{question}
Let $f\in\mathcal{SA}(\mathbb{D}_Q)$. For each $\lambda\in\mathbb{D}_Q$, is there a $p\in{\bf A}$ such that $f(\lambda)=p(\lambda)$, and $\displaystyle\sup_{z\in\mathbb{D}_Q}\|p(z)\|\le 1$$\mathord{?}$
\end{question}


\section*{Acknowledgments}

The author gratefully acknowledges his supervisor Professor Yoshimichi Ueda for pointing out that Theorem \ref{oka} and its corollary seem to suggest that any free compact nc subsets behave like finite  subsets. The author also thanks Professor James Eldred Pascoe and the referee for making him aware of several related works including \cite{PasE}.

\end{document}